\definecolor{darkgreen}{rgb}{0,.5,0}
\numberwithin{equation}{section}
\theoremstyle{plain}
\newtheorem*{theorem*}{Теорема}
\newtheorem*{theoremA}{Теорема Алберта}
\newtheorem{theorem}{Теорема}
\newtheorem{lemma}{Лемма}
\newtheorem{proposition}{Предложение}[section]
\theoremstyle{definition}
\newtheorem{definition}{Определение}
\newtheorem{remark}{Замечание}
\begin{document}

УДК 512.554.5

\textbf{О коммутативных изотопах йордановой алгебры}

\textbf{симметрической билинейной невырожденной }

\textbf{формы на двумерном векторном пространстве}

\textbf{\qquad}

\textbf{В.\,И.~Глизбург, С.\,В.~Пчелинцев}

\begin{abstract}
Ненулевой элемент $x$  алгебры называется \emph{ниль-элементом индекса} $2$, если $x^2=0$.
Назовем \emph{ниль-рангом алгебры} максимальное число линейно независимых ниль-элементов индекса $2$,
содержащихся в ней.
В \cite{KP2020} изучались простые коммутативные $3$-мерные алгебры ниль-ранга $3$.
Известно \cite{KP2020}, что всякая такая алгебра изотопна циклотомичной алгебре $C_3$.
В работе изучаются простые унитальные коммутативные $3$-мерные алгебр над алгебраически замкнутым полем характеристики не 2.
Доказано, что всякая простая унитальная коммутативная $3$-мерные алгебра ниль-ранга $2$
изотопна йордановой алгебре симметрической билинейной невырожденной формы на $2$-мерном векторном пространстве.

Библиография: 13 наименований

Ключевые слова: изотоп, стандартный изотоп, 3-мерная алгебра, коммутативная алгебра, простая алгебра, ниль-ранг.

Bibliography: 13 titles.

Keywords: isotope, standard isotope, 3-dimensional algebra, commutative algebra, simple algebra, nil-rank.

\end{abstract}

\section*{Введение}\label{s1}
Алгебра называется \emph{простой}, если она имеет ненулевое умножение и не содержит собственных идеалов.
Хорошо известно (и легко доказывается индукцией по размерности),
что \emph{в любой конечно-мерной алгебре $A$ существует композиционный ряд
\[A=A_0 \rhd A_1 \rhd A_2 \rhd ...\rhd A_k=0,\]
где $A_{i+1}\lhd A_i$  (идеал) и фактор-алгебры $A_i/A_{i+1}$, $i=\overline{0,k-1}$
либо простые алгебры, либо $1$-мерные алгебры с нулевым умножением.}

Классификация простых конечно-мерных алгебр является важнейшей задачей структурной теории.

В 1942 г. Алберт \cite{Alb1942} ввел понятие изотопа.

Алгебра называется \emph{изотопно-простой}, если всякий ее изотоп является простой алгеброй.
Простая алгебра с единицей является изотопно-простой.

Интерес к изучению изотопно-простых алгебр связан с теоремой Брака
о композиционном ряде \cite{Bru1944}: \emph{всякая конечно-мерная алгебра $A$
обладает изотопом  $B$, в котором существует композиционный ряд
\[B=B_0 \rhd B_1 \rhd B_2 \rhd ...\rhd B_k=0,\]
 где $B_{i+1}\lhd B_i$  и фактор-алгебры $B_i/B_{i+1}$, $i=\overline{0,k-1}$
либо изотопно-просты, либо $1$-мерные алгебры с нулевым умножением.}

В той же работе \cite{Bru1944} Брак доказал, что над алгебраичеки замкнутым полем
не существует $2$-мерных изотопно-простых алгебр.
Известно \cite{KP2020}, что каждая простая $3$-мерная антикоммутативная алгебра изотопна алгебре Ли
$sl_2$ бесследных матриц $2$-го порядка.

Ненулевой элемент $x$  алгебры называется \emph{ниль-элементом индекса} $2$, если $x^2=0$.
Назовем \emph{ниль-рангом алгебры} максимальное число ее линейно независимых ниль-элементов индекса $2$.
В \cite{KP2020} изучались простые коммутативные $3$-мерные алгебры с ниль-базисом (или алгебры ниль-ранга $3$).
Известно \cite{KP2020}, что всякая простая коммутативная $3$-мерная алгебра
с ниль-базисом изотопна циклотомичной (cyclotomic) алгебре  $C_3$.

В работе \cite{KP2020} приведен достаточно полный перечень статей, относящихся к изучению изотопов.
Значительное число статей посвящено изучению изотопов, связанных с автоморфизмами
коммутативных ассоциативных алгебр (см. обзор \cite{Tka2024}).

В статьях \cite{GP2020}--\cite{Pch2021} изучались изотопы
альтернативных, (-1,1) и йордановых алгебр и тождества изотопов.
В \cite{GP2020} было показано, что среди изотопов йордановой алгебры
симметрической билинейной невырожденной формы на бесконечно-мерном
векторном пространстве встречаются как простые алгебры, так и алгебры,
содержащие идеалы с нулевым умножением.

Всюду ниже, термин <<алгебра>> означает линейную алгебру над полем .
Если не оговорено противное, то алгебра предполагается унитальной коммутативной.

В данной работе изучаются простые $3$-мерные алгебры.
Основным результатом данной статьи является классификация (с точностью до изопотии)
простых $3$-мерных алгебр ниль-ранга $2$. Более точно, доказано, что

\emph{всякая простая $3$-мерная унитальная коммутативная алгебра ниль-ранга $2$ 
 над алгебраически замкнутым полем 
характеристики, отличной от $2$, изотопна
 йордановой алгебре $J_2$ симметрической билинейной невырожденной формы на $2$-мерном векторном пространстве}.

Доказано также, что всякая простая коммутативная $3$-мерная (необязательно унитальная) алгебра с ниль-базисом
изотопна унитальной коммутативной алгебре вида
\begin{center}
$C(\rho)=\langle 1,x,y|x^2=y^2=0,\,xy=yx=\rho(1 + x + y)\rangle$ при $\rho=-2$,
\end{center}
где до вертикальной черты записан базис алгебры, 1 -- нейтральный элемент по умножению,
 а после черты указаны произведения базисных элементов.

В связи с изучением алгебр с ниль-базисом введены, так называемые,
простые сильно вырожденные алгебры. Указаны примеры таких алгебр.
Доказано, что каждая такая алгебра не является изотопно-простой.
В частности, показано, что существуют простые алгебры с нетривиальным идемпотентом,
не являющиеся изотопно-простыми.

\section{Изотопы Алберта}\label{s2}
Напомним, что линейные алгебры   $(A,\cdot)$ и $(B,\circ)$
над основным полем $F$ называются \emph{изотопными},
если существует тройка $\Lambda=(\varphi,\psi,\xi)$ линейных изоморфизмов  $A\rightarrow B$,
удовлетворяющих условию
$x^\varphi\circ y^\psi =(x\cdot y)^\xi$   для любых $x,y\in A$.
При этом тройка изоморфизмов $\Lambda$  называется \emph{изотопией}.

Тройка $(\xi,\xi,\xi)$ является изотопией тогда и только тогда, когда $\xi:A\rightarrow B$
является изоморфизмом.

Пусть $f,g: A\rightarrow A$~--- обратимые линейные операторы на алгебре $(A,\cdot)$.
Определим алгебру $A^{\ast}=A^{(f,g)}$  на том же линейном пространстве $A$
относительно нового умножения  $x\ast y =x^f y^g$   для любых $x,y\in A$.
Алгебра $A^{\ast}$  называется \emph{главным изотопом} алгебры $A$.

Известно (см., \cite{Alb1942}, \cite{Pch2020}), что
\emph{две алгебры изотопны тогда и только тогда, когда одна из них изоморфна главному изотопу другой}.
В частности, если алгебры $A$  и $B$  унитальны и изотопны, то главный изотоп $B^{\ast}$  также унитален.

Далее, отношение изотопии является отношением эквивалентности, причем более широким, чем отношение изоморфизма.

\begin{definition}
  Элемент  $a$   алгебры   $A$  называется   $R$\emph{-обратимым},
  если оператор $R_a:x \mapsto xa$  правого умножения на элемент $a$  обратим.
\end{definition}
Заметим, что обратный оператор  $R_a^{-1}$, вообще говоря, не является оператором правого умножения (см. замечание 3).

Всюду ниже, если не оговорено противное, основное поле $F$ предполагается алгебраически замкнутым характеристики не 2.
Через $F^{\times}$  обозначается множество ненулевых скаляров поля  $F$.
\begin{lemma}\label{l:1}
Пусть $A$~--- произвольная алгебра,  $\sigma,\tau \in F^{\times}$ и $1=1_A$~--- тождественное отображение алгебры $A$ на себя.
Тогда изотоп $A^{\ast}= A^{(\sigma 1,\tau 1)}$
и исходная алгебра  $A$ изоморфны. В частности, одномерная алгебра
и любой её главный изотоп изоморфны.
\end{lemma}
\begin{proof}
Пусть  $\omega=(\sigma\tau)^{-1}$. Проверим, что отображение
\[\xi:A\rightarrow A^{\ast},\quad a^{\xi}=\omega a,\]
является изоморфизмом алгебр  $A$ и $A^{\ast}$.
Поскольку гомотетия $\xi$ является линейным изоморфизмом,
то достаточно проверить, что она является мультипликативным гомоморфизмом.
Имеем:
  \[(ab)^{\xi}=\omega ab,\qquad a^{\xi}\ast b^{\xi}=(\omega a)\ast(\omega b)=\omega^2\sigma\tau ab =\omega ab,\]
что и доказывает первую часть леммы.

Пусть ${\rm dim}_F(A)=1$. Тогда любой линейный оператор на $A$ является \emph{скалярным},
т.е. имеем вид $\sigma 1$, где $\sigma \in F$. Умножение в алгебре является нулевым тогда и только тогда,
когда оно нулевое в любом ее изотопе. Значит, справедливо и второе утверждение леммы.
\end{proof}
\begin{theoremA}
Если изотоп $A^{\ast}=A^{(\varphi,\psi)}$  унитален,
то существуют элементы  $g,h\in A$
такие, что $\varphi R_g =\psi L_h = 1_A$, где $L_h$~--- оператор левого умножения на элемент $h$.
\end{theoremA}
\begin{remark}
Всякая одномерная алгебра над алгебраически замкнутым полем $F$
либо имеет нулевое умножение, либо изоморфна полю $F$.
\end{remark}
\begin{remark}
Если изотоп  $A^{(\varphi,\psi)}$  коммутативен,
то для любых $\sigma,\tau \in F^{\times}$
изотоп  $A^{(\sigma\varphi,\tau\psi)}$ также коммутативен.
\end{remark}

\section{Йорданова алгебра $J_2$ симметрической билинейной невырожденной формы}\label{s3}

\begin{definition}
Коммутативная алгебра $J$  называется \emph{йордановой}, если в ней выполнено тождество
\[(x^2,y,x)=0, \]
где $(u,v,w)=(uv)w-u(vw)$~--- ассоциатор элементов $u,v,w$.
\end{definition}
В теории йордановых алгебр важную роль играет простая алгебра $J_n$
симметрической билинейной невырожденной формы,
определенной на $n$-мерном векторном пространстве  (см. \cite{Jac1968}, \cite{ZSSS}).
Напомним её определение.

Пусть $V_n$~--- $n$-мерное векторное пространство над полем  $F$,
снабженное симметричной билинейной формой  $f(x,y)$.
Построим векторное пространство $J_n=F1 \oplus V_n$,
которое является прямой суммой пространства $V_n$ и одномерного пространства $F1$  с базисом $1$.
Определим произведение в $J_n$:
\[(\alpha 1 + x)(\beta 1 + y)= (\alpha\beta + f(x,y))1 + (\alpha y +\beta x)\] 	
для $x,y\in V_n,\, \alpha,\beta\in F$. Если форма $f$ невырождена,
то в пространстве $V$ можно выбрать ортонормированный базис.
Если $n$ четно, то в пространстве $V_n$ можно выбрать симплектический базис.

В частности, в $V_2$ можно выбрать симплектический базис $(x,y)$, т.е.
\[f(x,x)=f(y,y)=0,\quad f(x,y)=1\]
(см. \cite{Mal2005}, \cite{Art2016}).
Значит, алгебра $J_2$ может быть задана в виде
\[J_2=\langle 1,x,y\mid x^2=y^2=0, xy=1\rangle.\]
Базис $E=(1,x,y)$ алгебры $J_2$ назовем \emph{каноническим}.

Элемент $a=\lambda 1 + \alpha x + \beta y$
будем записывать в виде $a=(\lambda,\alpha,\beta)_E$;
если ясно в каком базисе задано представление,
то буква $E$ будет опускаться.
\begin{lemma}\label{l:2}
Если элемент $a=(\lambda,\alpha,\beta)$  является $R$-обратимым в $J_2$, то $\lambda \neq 0$.
Далее, элемент $a=(1,\alpha,\beta)$    является   $R$-обратимым тогда и только тогда,
когда $\Delta_a:={\rm det}R_a= 1 -2\alpha \beta \neq 0$.
\end{lemma}
\begin{proof}
Поскольку $x(\alpha x + \beta y)=\beta xy=\beta 1$  и $y(\alpha x + \beta y)=\alpha xy= \alpha 1$,
то элемент $(0,\alpha,\beta)=\alpha x + \beta y$ не является $R$-обратимым.
Далее, если $a=(1,\alpha,\beta)$, то имеем
\begin{equation*}
 \begin{cases}
   1\cdot a = 1\cdot (1+ \alpha x + \beta y) =1+ \alpha x + \beta y, \\
   x\cdot a = x\cdot (1+ \alpha x + \beta y) =x+  \beta 1, \\
   y\cdot a = y\cdot (1+ \alpha x + \beta y) =y+ \alpha 1,
\end{cases}
\end{equation*}
т.е. матрица оператора $R_a$ в базисе $(1,x,y)$ имеет вид
$R_a=\left(\begin{array}{ccc}
   1 & \alpha & \beta \\
   \beta & 1 & 0 \\
   \alpha & 0 & 1
\end{array}\right );$
далее мы не будем различать в обозначениях оператор $R_a$ и его матрицу,
если ясно относительно какого базиса она записана.
Оператор $R_a$  обратим тогда и только тогда, когда ${\rm  det}R_a\neq 0$;
${\rm det}R_a= 1-2\alpha\beta$.
Лемма доказана.
\end{proof}
\begin{remark}
Всякий ниль-элемент индекса 2 алгебры $J_2$ содержится в
множестве $Fx \cup Fy$, в частности, ниль-ранг алгебры $J_2$  равен $2$.
В самом деле, если $a$~--- ниль-элемент индекса 2, то $aR_a=a^2=0$,
т.е. оператор $R_a$ вырожден и $a=(0,\alpha,\beta)$. Тогда имеем:
\[0=a^2=(\alpha x + \beta y)^2 =2\alpha\beta 1,\]
откуда вытекает, что $\alpha=0$ или $\beta=0$, т.е. $a\in Fx \cup Fy$.
\end{remark}

\section{Неунитальная изотопно-простая алгебра $C_2$  ниль-ранга 2}\label{s4}
Рассмотрим коммутативную алгебру $C_2$  и отметим некоторые ее свойства;
эта алгебра имеет \emph{канонический базис} $(a,b,c)$  и следующeе умножение
\[C_2=\langle a,b,c|a^2=b,ab=a,ac=c,b^2=c^2=0,bc=b \rangle.\]

\begin{lemma}\label{l:3}
Алгебра  $C_2$ не имеет единицы $1$.
\end{lemma}
\begin{proof}
В самом деле, пусть $1=\alpha a + \beta b + \gamma c$~---   единица в $C_2$
для некоторых скаляров  $\alpha ,\beta ,\gamma \in F$. Тогда
\[a=a\cdot 1= a(\alpha a + \beta b + \gamma c)=\alpha  a^2 + \beta ab + \gamma ac= \alpha  b + \beta a + \gamma c.\]
Отсюда  $\alpha =0 ,\beta =1 ,\gamma =0$, т.е.  $1=b $ и $b=bc=1c=c$,-- противоречие.
\end{proof}

\begin{lemma}\label{l:4}
Всякий ниль-элемент алгебры $C_2$ содержится в множестве  $Fb \cup Fc$,
значит, ниль-ранг алгебры $C_2$  равен $2$.
\end{lemma}
\begin{proof}
В самом деле, пусть $u=\alpha a + \beta b + \gamma c$~---  ниль-элемент. Тогда
\[0=(\alpha a + \beta b + \gamma c)^2=\alpha^2 b + 2\alpha\beta a + 2\alpha\gamma c + 2\beta\gamma b =\]
\[2\alpha\beta a + (\alpha^2 + 2\beta\gamma) b+ 2\alpha\gamma c,\]
откуда  $\alpha\beta=0, \alpha\gamma=0, \alpha^2 + 2\beta\gamma=0$.
Если  $\alpha\neq 0$, то  $\beta=0$,  $\alpha^2=0$, -- противоречие.
Значит, $\alpha= 0$  и  $u=\beta b + \gamma c$. Поскольку  $\beta\gamma=0$, то  $u\in Fb \cup Fc$, что и требовалось.
\end{proof}
\begin{lemma}\label{l:5}
В алгебре $C_2$  с каноническим базисом $(a,b,c)$  верно:
\[R_a=e_{12}+e_{21}+e_{33};\qquad R_a^2=I=e_{11}+e_{22}+e_{33},\]
где $e_{ij}$~--- матричные единицы и оператор умножения отождествляется с его матрицей 
в каноническом базисе.
\end{lemma}
Указанная лемма немедленно вытекает из определения алгебры  $C_2$.

\begin{lemma}\label{l:6}
Алгебра $C:=C_2$  изотопна йордановой алгебре $J_2$
симметрической билинейной невырожденной формы на $2$-мерном векторном пространстве.
В частности, алгебра $C$ изотопно-проста.
\end{lemma}
\begin{proof}
Рассмотрим алгебру $A=C^{(\varphi,\varphi)}$, $\varphi=R_a$; ее таблица умножения:
\[a\ast a=a^2a^2=b^2=0,\quad b\ast b=(ba)^2=a^2=b,\quad c\ast c=(ca)^2=c^2=0,\]
\[a\ast b=a^2(ba)=ba=a,\quad b\ast c=(ba)(ca)=ac=c,\quad a\ast c=a^2(ca)=bc=b,\]
т.е. $A=\langle a,b,c|a^2=0,b^2=b,c^2=0,ab=a,ac=b,bc=c \rangle$
и после переобозначений $x:=a, 1:=b,y:=c$  получаем алгебру  $J_2$.

Поскольку алгебра $J_2$ унитальна и проста, то она изотопно-проста,
в частности, такой же является и алгебра $C$. Лемма доказана.
\end{proof}

\section{Простые унитальные 3-мерные коммутативные алгебры ниль-ранга 2}
Обозначим через $C(\alpha,\beta,\gamma)$ унитальную коммутативную алгебру вида
\[C(\alpha,\beta,\gamma)=\langle 1,x,y|x^2=y^2=0,xy=\alpha 1 +\beta x + \gamma y\rangle,\]
где $\alpha,\beta,\gamma \in F$; базис $(1,x,y)$  назовем \emph{каноническим}.
\begin{lemma}\label{l:7}
Пусть $C$~--- унитальная коммутативная алгебра, $x,y$~--- ее линейно независимые ниль-элементы индекса 2,
$V=\langle x,y \rangle$~--- подпространство, порожденное элементами $x,y$. Тогда $1\notin V$, т.е. $(1,x,y)$~--- базис алгебры $C$.
Следовательно, алгебра $C$ совпадает с алгеброй $C(\alpha,\beta,\gamma)$ при подходящих скалярах $\alpha,\beta,\gamma$.
\end{lemma}
\begin{proof}
В самом деле, если  $1=\alpha x + \beta y$, то $x=\beta xy$   и  $y=\alpha xy$.
Значит, элементы  $x,y$ линейно зависимы, -- противоречие.
\end{proof}

\begin{lemma}\label{l:8}
Если алгебра $C(\alpha,\beta,\gamma)$ является простой, то  $\alpha \neq 0$.
\end{lemma}
\begin{proof}
Если  $\alpha = 0$, то $V=\langle x,y \rangle$  -- собственный идеал.
\end{proof}
Заметим, что алгебра  $C(1,0,0)$ является йордановой алгеброй симметрической билинейной невырожденной формы $J_2$.

Положим $C(\alpha):=C(\alpha,\alpha,\alpha)$.
\begin{lemma}\label{l:9}
Простая алгебра $C(\alpha,\beta,\gamma)$ изоморфна одной из алгебр:
\begin{center}
$C(\alpha),\, \alpha\neq 0,$  либо  $C(1,1,0)$, либо  $C(1,0,0)$.
\end{center}
\end{lemma}

\begin{proof}
Представим в виде последовательности пунктов.

$1^0$. Докажем, что алгебра $C(\alpha,\beta,\gamma)$ при $\alpha \beta\gamma\neq 0$
изоморфна алгебре  $C(\rho)$, где $\rho=\alpha^{-1}\beta\gamma$.

Заменим базис в  $V=\langle x,y \rangle$:  $x'=\lambda x, y'=\mu y$. Тогда
\[x'y'= (\lambda x)(\mu y)= \lambda \mu (\alpha 1 +\beta \lambda^{-1} x' + \gamma \mu^{-1}y'=
\alpha \lambda \mu 1 + \beta \mu x' + \gamma \lambda y'.\]
Если $\beta\gamma\neq 0$, то полагая  $\lambda = \alpha^{-1}\beta , \mu =\alpha^{-1}\gamma$, получаем
\[\alpha \lambda \mu  =\alpha \alpha^{-1}\beta \alpha^{-1}\gamma = \alpha^{-1}\beta\gamma,\quad
\beta \mu = \beta \alpha^{-1}\gamma = \alpha^{-1}\beta\gamma,\]
\[\gamma \lambda = \gamma \alpha^{-1}\beta = \alpha^{-1}\beta\gamma.\]
Это означает, что алгебра $C(\alpha,\beta,\gamma)$ при $\alpha \beta\gamma\neq 0$
изоморфна алгебре  $C(\rho)$, где  $\rho=\alpha^{-1}\beta\gamma$.

$2^0$. Алгебры $C(\alpha,\beta,\gamma)$ и $C(\alpha,\gamma,\beta)$ изоморфны,
поскольку перестановка $(x,y)$ индуцирует изоморфизм указанных алгебр.

$3^0$. Алгебры $C(\alpha,\beta,0),\,\alpha\beta \neq 0$ и $C(1,1,0)$ изоморфны.

Положим $x'=\alpha^{-1}\beta x, y'=\beta^{-1}y$;
$x', y'$  являются ниль-элементами индекса 2 и
\[x'y'= \alpha^{-1}\beta x\cdot \beta^{-1}y=\alpha^{-1} xy = \alpha^{-1}(\alpha 1 +\beta x)= 1+\alpha^{-1}\beta x=1+x',\]
т.е. алгебры $C(\alpha,\beta,0),\,\alpha\beta \neq 0$, и $C(1,1,0)$ изоморфны.

$4^0$. Алгебры  $C(\omega^2,0,0),\,\omega \neq 0$ и $C(1,0,0)$ изоморфны,
поскольку элементы $x'=\omega^{-1} x, y'=\omega^{-1} y$   являются
ниль-элементами индекса $2$ в алгебре $C(\omega^2,0,0),\,\omega \neq 0$  и выполнены равенства:
$x'y'=(\omega^{-1} x)(\omega^{-1} y)=\omega^{-2} xy=\omega^{-2}\omega^{2}1=1$.
В силу алгебраической замкнутости поля $F$ простые алгебры $C(\alpha,0,0)$ и $C(1,0,0)$ изоморфны.
\end{proof}
\begin{lemma}\label{l:ind2}
Ниль-ранг простой алгебры $C(\alpha)$ равен 2, если $\alpha\neq -2$.
Ниль-ранг алгебры $C(-2)$ равен 3, т.е. указанная алгебра имеет ниль-базис.
\end{lemma}
\begin{proof}
  Допустим, что элемент $a = 1+\lambda x + \mu y$ является ниль-элементом индекса 2
  в алгебре $C(\alpha)$. Тогда
  \[0=a^2= (1+\lambda x + \mu y)^2=1+2\lambda x + 2\mu y +2\alpha\lambda\mu (1+x+y).\]
  После приведения подобных членов получаем систему уравнений
  \begin{equation*}
 \begin{cases}
  1:\, 1+ 2\alpha \lambda\mu=0, \\
  x:\, 2\lambda + 2\alpha \lambda\mu=0, \\
  y:\, 2\mu +  2\alpha \lambda\mu=0,
\end{cases}
\end{equation*}
решением которой является набор $ \lambda=\mu=\frac{1}{2}, \alpha=-2$.
Если же $\alpha\neq -2$, то указанная система решений не имеет.
\end{proof}
Следуя \cite{KP2020}, напомним
\begin{definition}
  Изотоп $C^{(\varphi,\psi)}$ называется \emph{стандартным},
  если операторы $\varphi$ и $\psi$ пропорциональны.
\end{definition}
\begin{lemma}\label{l:inv_nr}
Произвольная алгебра $C$ и её стандартный изотоп $C^{\ast}$
имеют одинаковые ниль-ранги.
\end{lemma}
\begin{proof}
В силу леммы \ref{l:1} можно считать, что стандартный изотоп имеет вид $C^{\ast}=C^{(\varphi,\varphi)}$.
Далее, если $e_1,...,e_k$ линейно независимые ниль-элементы индекса 2 в алгебре $C$,
то $e_1^{\varphi^{-1}},...,e_k^{\varphi^{-1}}$
линейно независимые ниль-элементы индекса 2 в изотопе $C^{\ast}$.
Значит, при стандартной изотопии ниль-ранг не уменьшается.
Поскольку понятие стандартной изотопии симметрично, то ниль-ранги алгебр $C$ и $C^{\ast}$ равны.
\end{proof}

\begin{lemma}\label{l:10}
Алгебры $C(1,1,0)$ и $C(1,0,0)$  изотопны, но не изоморфны.
\end{lemma}
\begin{proof}
Заметим, что вторая алгебра $C(1,0,0)=J_2$ йорданова, а первая нет.
В самом деле, предположив, что алгебра $C(1,1,0)$ йорданова, имеем:
\[2(xy,x,y)=-(y^2,x,x)=0,\]
поскольку $y^2=0$. Однако, в алгебре $C(1,1,0)$ выполнены равенства:
\[(xy,x,y)=(1+x,x,y)=(x,x,y)=-x(xy)=-x(1+x)=-x\neq 0.\]
Значит, указанные алгебры неизоморфны.

Пусть $C:=J_2$ и $(1,x,y)$~--- её канонический базис.
Вычислим оператор правого умножения $R_{c}$ для $c:=1+x$:
\[1c=1+x,\quad xc=x(1+x)=x,\quad yc=y(1+x)=1 + y,\]
$R_{c}=\left(\begin{array}{ccc}
   1 & 1 & 0 \\
   0 & 1 & 0 \\
   1 & 0 & 1
\end{array}\right )$.
Положим
$\varphi = R_{c}^{-1}= \left(\begin{array}{ccc}
   1 & 1 & 0 \\
   0 & 1 & 0 \\
   1 & 0 & 1
\end{array}\right )^{-1}=\left(\begin{array}{ccc}
   1 & -1 & 0 \\
   0 & 1 & 0 \\
   -1 & 1 & 1
\end{array}\right )$.
 Рассмотрим изотоп $C^{\ast}=C^{(\varphi,\varphi)}$ , считая $u\ast v=u^\varphi v^\varphi$ , и в нем элементы:
 \[c:=1+x,\quad a:=x,\quad b:=1+y,\quad e:=c^2=(1+x)^2=1+2x .\]
Тогда
\[a^\varphi=x^\varphi =(0,1,0)\left(\begin{array}{ccc}
   1 & -1 & 0 \\
   0 & 1 & 0 \\
   -1 & 1 & 1
\end{array}\right )= (0,1,0)=x,\]
\[b^\varphi=(1+y)^\varphi =(1,0,1)\left(\begin{array}{ccc}
   1 & -1 & 0 \\
   0 & 1 & 0 \\
   -1 & 1 & 1
\end{array}\right )= (0,0,1)=y,\]
\[e^\varphi=(1+2x)^\varphi =(1,2,0)\left(\begin{array}{ccc}
   1 & -1 & 0 \\
   0 & 1 & 0 \\
   -1 & 1 & 1
\end{array}\right )= (1,1,0)=1+x=c.\]
Заметим, что $1=e-2x=e-2a$. Покажем, что $e$~--- единица изотопа, поскольку для любого элемента $v$:
\[e\ast v = e^\varphi v^\varphi = (c^2R_c^{-1})(vR_c^{-1})= c(vR_c^{-1})=(vR_c^{-1})R_c=v.\]
Далее,
\[a\ast a = (a^\varphi)^2 = x^2= 0,\qquad b\ast b = (b^\varphi)^2 =y^2 = 0,\]
\[a\ast b = a^\varphi b^\varphi=xy= 1= e-2a.\]
Полагая $a'=-2a, b'=-\frac{1}{2}b$, получаем $a'\ast b' = a\ast b =e-2a = e+a'$.
Тем самым  доказано, что $(e,a',b')$-- канонический базис алгебры $C(1,1,0)$,
т.е. алгебры $C^{\ast}=C^{(\varphi,\varphi)}$  и $C(1,1,0)$  изоморфны.
Отсюда вытекает, что $C=J_2$  и $C(1,1,0)$ изотопны.
\end{proof}
\begin{remark}
Проверим, что оператор $R_{1+x}^{-1}$ в алгебре $J_2$ не представим в виде $R_{a}$ ни для какого элемента $a\in C$.
Заметим, что в лемме \ref{l:10}  показано
$R_{1+x}^{-1}= \left(\begin{array}{ccc}
   1 & -1 & 0 \\
   0 & 1 & 0 \\
   -1 & 1 & 1
\end{array}\right )$.
    Если $a=\alpha1+\beta x+\gamma y$, где $\alpha,\beta,\gamma \in F$, то справедливы равенства
    \[    x(1+x)=x(\alpha1+\beta  x+\gamma  y)=\alpha x+\gamma 1=\gamma  1+\alpha x,  \]
    \[y(1+x)=y(\alpha1+\beta  x+\gamma  y)=\alpha y+\beta  1=\beta 1+\alpha y.    \]
    Допустим, что $R_{1+x}^{-1}=R_{a}$. Тогда
$x=\alpha1+\beta  x+\gamma  y,\,	1-x=\gamma  1+\alpha x$.
    Отсюда имеем из первого уравнения $\alpha=0$, а из второго: $\alpha=-1$,
    что невозможно.
\end{remark}
\begin{lemma}\label{l:11}
Алгебры $B:=C(\rho),\,\rho\neq 0;-2$ и $C:=J_2$  изотопны.
\end{lemma}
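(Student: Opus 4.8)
The plan is to realize $C(\rho)$ as a canonical isotope of $J_2$, exactly in the spirit of Lemma~\ref{l:10}. I will pick an $R$-invertible element $c\in J_2$, form the canonical isotope $A=J_2^{(\varphi,\varphi)}$ with $\varphi=R_c^{-1}$ (which is isotopic to $J_2$ by construction), and then produce a basis of $A$ whose multiplication table is literally that of $C(\rho)$. Since an isomorphism is a special isotopy and isotopy is transitive, this yields that $C(\rho)$ and $J_2$ are isotopic.

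Concretely, in the canonical basis $(1,x,y)$ of $J_2$ I take $c=1+px+qy$. By Lemma~\ref{l:2} one has $\det R_c=1-2pq$, so $c$ is $R$-invertible once $pq\neq\tfrac12$. In the isotope $A$ the element $e:=c^2$ is the unit: the computation $e\ast v=(c^2R_c^{-1})(vR_c^{-1})=c\,(vR_c^{-1})=(vR_c^{-1})R_c=v$ of Lemma~\ref{l:10} applies verbatim. The elements $x'=xR_c$ and $y'=yR_c$ satisfy $(x')^{\varphi}=x$, $(y')^{\varphi}=y$, so that $x'\ast x'=x^2=0$, $y'\ast y'=y^2=0$ and $x'\ast y'=xy=1$. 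Expanding gives $e=(1+2pq)1+2px+2qy$, and solving for the original unit $1=(1,0,0)$ in the basis $(e,x',y')$ yields
\[
x'\ast y'=\frac{1}{1-2pq}\bigl(e-2p\,x'-2q\,y'\bigr).
\]

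The final step is to rescale the nilpotents so that the three structure constants coincide. Putting $X=-2p\,x'$ and $Y=-2q\,y'$ (both nonzero once $p,q\neq0$) turns the displayed identity into $X\ast Y=\dfrac{4pq}{1-2pq}\,(e+X+Y)$, while still $X\ast X=Y\ast Y=0$; the determinant expressing $e,x',y'$ through $1,x,y$ equals $1-2pq\neq0$, so $(e,X,Y)$ is a basis. Hence $A\cong C(\kappa)$ with $\kappa=\dfrac{4pq}{1-2pq}$, and it remains to hit the prescribed value: solving $\dfrac{4pq}{1-2pq}=\rho$ gives $pq=\dfrac{\rho}{2(2+\rho)}$, which is defined precisely because $\rho\neq-2$ and nonzero precisely because $\rho\neq0$; then $1-2pq=\dfrac{2}{2+\rho}\neq0$, so $c$ is indeed $R$-invertible and $p,q\neq0$.

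The one genuine subtlety — and the reason I keep two independent parameters $p,q$ rather than the symmetric choice $c=1+t(x+y)$ — is that the symmetric choice forces $\rho=4t^2/(1-2t^2)$, i.e. it would require $\rho/(2(2+\rho))$ to be a square in $F$, which fails over a general field. Carrying $p$ and $q$ separately constrains only their product, so one may simply take, say, $p=1$ and $q=\rho/(2(2+\rho))$; this makes the argument valid over every field $F$ of characteristic $\neq2$. Everything else is the routine check that the table above reproduces the defining relations of $C(\rho)$.
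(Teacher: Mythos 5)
Your proof is correct and follows essentially the same route as the paper: form the canonical isotope $J_2^{(R_c^{-1},R_c^{-1})}$ for an invertible $c=1+px+qy$, check that $e=c^2$ is its unit and that $x'=xc$, $y'=yc$ give the relations of $C(\alpha,\beta,\gamma)$, then rescale to $C\bigl(4pq/(1-2pq)\bigr)$ and solve for the product $pq$. The paper simply fixes $q=1$ from the outset (taking $c=1+\gamma x+y$) and delegates the rescaling step to Lemma~\ref{l:9}, item $1^0$, so no square-root issue arises there either; your extra parameter changes nothing of substance.
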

\begin{proof}
Пусть $(1,x,y)$~--- канонический базис алгебры  $C=J_2$.
Пусть  $c:= 1+\gamma x+y$, $\gamma \neq 0; \frac{1}{2}$. Тогда в алгебре $C=J_2$  имеем:
\[1c= 1+\gamma  x+y,\quad xc=1+x,\quad yc=\gamma 1+y,\]
 \[R_{c} =\left(\begin{array}{ccc}
   1 & \gamma & 1 \\
   1 & 1 & 0 \\
   \gamma & 0 & 1
\end{array}\right ),\qquad{\rm det R_{c}}=1-2\gamma\neq 0.\]
Положим $\varphi=R_{c}^{-1} =\delta\left(\begin{array}{ccc}
   1 & -\gamma & -1 \\
   -1 & 1-\gamma & 1 \\
   -\gamma & \gamma^2 & 1-\gamma
\end{array}\right )$, где $\delta:=(1-2\gamma)^{-1}$.

Рассмотрим изотоп  $C^{\ast}=C^{(\varphi,\varphi)}$, считая  $u\ast v=u^\varphi v^\varphi$,
и в нем элементы:
\[e:=c^2,\quad x':=xc=1+x,\quad y':=yc=\gamma 1+y.\]

Элемент $e$ является единицей в изотопе и имеет представление:
\[e:=c^2 = (1+\gamma x+y)^2 = 1+2\gamma+2\gamma x+2y = \]
\[1+2\gamma+2\gamma (x'-1)+2(y'-\gamma1)=(1-2\gamma) 1 +2\gamma x'+2y'.\]
Тогда
\begin{center}
$1=\delta(e-2\gamma x'-2y')$.
\end{center}
Далее,
$x'^\varphi=x,\quad y'^\varphi=y$, и
\[x'\ast x'=(x'^\varphi)^2=x^2=0,\quad y'\ast y'=(y'^\varphi)^2=y^2=0,\]
\[x'\ast y'=x'^\varphi y'^\varphi=xy=1=\delta(e-2\gamma x'-2y').\]
Следовательно, алгебра $C^{\ast}$ изоморфна алгебре $C(\delta,-2\gamma\delta,-2\delta)$.
В силу леммы \ref{l:9}, п. $1^0$, верно $C(\alpha,\beta,\gamma)\cong C(\alpha^{-1}\beta\gamma)$,
значит, $C(\delta,-2\gamma\delta,-2\delta)\cong C(4\gamma\delta)$.
    Докажем, что для любого $\rho \neq -2$ можно подобрать $\gamma$ так, что
    $ 4\gamma\delta=\rho$. Равенство $ 4\gamma(1 - 2\gamma)^{-1}=\rho$ верно при
    $\gamma=\frac{\rho}{2\rho+4}$. Лемма доказана.
\end{proof}
\begin{remark}
Первое ограничение в лемме $\rho\neq 0$ связано с простотой алгебры, 
а второе $\rho\neq -2$~--- с ниль-рангом алгебры.
\end{remark}
Тем самым доказана
\begin{theorem}\label{th:1}
Простая унитальная коммутативная $3$-мерная алгебра ниль-ранга $2$ 
над алгебраически замкнутым полем $F$
характеристики, отличной от $2$,
изотопна йордановой алгебре $J_2$ симметрической билинейной невырожденной формы
на 2-мерном векторном пространстве.
\end{theorem}
Остается открытым следующий вопрос. \emph{Верно ли, что всякая изотопно-простая $3$-мерная
коммутативная алгебра ниль-ранга $2$ без единицы является изотопом алгебры $J_2$}?
Определенную надежду на положительный ответ дает алгебра $C_2$, введенная в п. \ref{s4}.
\section{Циклотомичная алгебра $C_3$}
Обозначим через  $C_3$ \emph{циклотомичную} алгебру:
\[C_3=\langle x,y,z| x^2=y^2=z^2=0, xy=yx=z,yz=zy=x,zx=xz=y\rangle,\]
которая напоминает $3$-мерную алгебру Ли $V_3$  относительно векторного произведения.
Хорошо известно, что алгебры Ли $V_3$  и $sl_2$  над алгебраически замкнутым полем изоморфны
(см., например, \cite{Jac1979}).
В \cite{KP2020}, теорема 5, доказано, что всякая простая коммутативная $3$-мерная алгебра
с ниль-базисом стандартно-изотопна  алгебре $C_3$. В \cite{KP2020} доказано, что алгебра $C_3$ не унитальна,
 но она обладает стандартным унитальным изотопом. Более того, справедлива
\begin{theorem}\label{th:2}
Всякая простая коммутативная алгебра с ниль-базисом над алгебраически замкнутым полем $F$
характеристики, отличной от $2$, обладает стандартным изотопом
изоморфным алгебре $C(-2)$.
\end{theorem}
\begin{proof}
Пусть $C:=C_3$. Рассмотрим в алгебре $C$ элемент $c:=x+y+z$ и вычислим его квадрат:
\[e:=c^2=(x+y+z)^2=2c.\]
Далее, $xc=x(x+y+z)=xy+xz= y+z$, значит,
\begin{center}
$R_{c} =\left(\begin{array}{ccc}
   0 & 1 & 1 \\
   1 & 0 & 1 \\
   1 & 1 & 0
\end{array}\right )$ и
$\:\varphi=R_{c}^{-1}=\frac{1}{2}\left(\begin{array}{ccc}
   -1 & 1 & 1 \\
   1 & -1 & 1 \\
   1 & 1 & -1
\end{array}\right )$.
\end{center}
Поскольку $(1,1,0)\frac{1}{2}\left(\begin{array}{ccc}
   -1 & 1 & 1 \\
   1 & -1 & 1 \\
   1 & 1 & -1
\end{array}\right )=(0,0,1)$, то $(x+y)^\varphi=z$.
Значит, в силу симметрии справедливы равенства:
\[(x+y)^\varphi=z,\quad (y+z)^\varphi=x,\quad (z+x)^\varphi=y.\]
Таким образом, в изотопе  $C^{\ast}=C^{(\varphi,\varphi)}$, где  $u\ast v=u^\varphi v^\varphi$,
 имеется ниль-базис $x'= y+z$, $y'= z+x$, $z'= x+y$, причем $e=x'+y'+z'$~--- единица в $C^{\ast} $.
 Имеем:
 \[x\ast y'= x'^\varphi y'^\varphi=xy=z.\]
 Полагая
$a:=-2x'=-2y-2z,\quad b:=-2y'=-2z-2x$,
заметим, что
\[e+a+b=2x+2y+2z-2y-2z-2z-2x=-2z.\]
Тогда  $a\ast b=(-2x')\ast (-2y')=4x'\ast y'=4z=(-2)(e+a+b)$.
Значит, алгебра $C^{\ast}$  изоморфна алгебре $C(-2)$. Отсюда в силу \cite{KP2020} получаем требуемое.
\end{proof}

\section{Сильно вырожденные алгебры}

\begin{definition}
Пусть $n\geq 2$. Простая $(n+1)$-мерная алгебра называется \emph{сильно вырожденной},
если она содержит $n$-мерную подалгебру с нулевым умножением.
Ниль-ранг такой алгебры равен $n$. В самом деле, если он равен $n+1$, то
квадрат алгебры имеет размерность $\leq n$, что невозможно в силу простоты сильно вырожденной алгебры.
\end{definition}
Пусть $n\geq 2$. Пусть $G_n$~--- коммутативная алгебра с базисом $(x_1,...,x_n,e)$,
в которой $V=\langle x_1,...,x_n\rangle$~--- подалгебра с нулевым умножением и
\[x_1e=e+x_2,x_ie=x_i+x_{i+1}\,(2\leq i\leq n-1),x_ne=x_n+x_{1}, e^2=e.\]
\begin{proposition}\label{pr:1}
Алгебра $G_n$ является простой.
\end{proposition}
\begin{proof}
Пусть $0\neq I\lhd G_n$, $0 \neq a:=\alpha e + \sum_i\beta_i x_i\in I$.
Знак сравнения $\equiv$ в этом доказательстве будем понимать по модулю $I$.

$1^0$. Докажем, что $\alpha =0$. Допустим, от противного, что $\alpha \neq 0$.
Тогда можно считать, что $\alpha =1$.
Для любого $2\leq i\leq n-1$ имеем
\[ 0\equiv ax_1=ex_1=e+x_2, \qquad
  0\equiv (e+x_2)x_i=x_i+x_{i+1}.\]
  Далее, $0\equiv (e+x_2)x_n=x_n+x_{1}$  и
  $0\equiv (e+x_{2})e=e+x_2+x_3\equiv x_3$.
  Значит, для любого $1\leq i\leq n$ верно $x_i\equiv 0$,
  но тогда $e\equiv 0$ и $I= G_n$, -- противоречие.

 Тем самым доказано, что $0 \neq a = \sum_i\beta_i x_i\equiv 0$.

 $2^0$. Докажем, что $\beta_1=0$.
 Если $\beta_1\neq 0$, то $a'= x_1 + \sum_{i\geq 2}\beta_i x_i\equiv 0$.
 Значит, $0 \equiv a'e= e+v$, где $v \in V$, что противоречит п. $1^0$.

Следовательно, $0 \neq a=\sum_{i\geq 2}\beta_i x_i\equiv 0$
Выберем наибольший индекс $k$ такой, что $\beta_k\neq 0$.

$3^0$. Докажем, что $k < n$. Если $k= n$, то $0 \neq a'' =\sum_{i\geq 2}^{n-1}\beta_i x_i + x_n\equiv 0$.
Тогда $0 \equiv a''e =x_1+\sum_{i\geq 2}^{n}\gamma_i x_i$, что противоречит п.$2^0$.

$4^0$. Итак, для выбранного индекса $k < n$ верно $b :=\sum_{i\geq 2}^{k-1}\beta_i x_i + x_k\equiv 0$.
Значит, $0\equiv be :=\sum_{i\geq 2}^{k-1}\gamma_i x_i + x_k+x_{k+1}$, что противоречит максимальности индекса $k$.
\end{proof}
Алгебра $G_n$ не является изотопно-простой; более того, справедлив более общий факт.

\begin{proposition}\label{pr:2}
Не существует изотопно-простых сильно вырожденных коммутативных алгебр.
\end{proposition}
\begin{proof}
Пусть $A=\langle x_1,...,x_n,t\rangle$~--- изотопно-простая сильно вырожденная коммутативная алгебра (унитальность не предполагается);
$V=\langle x_1,...,x_n\rangle$~--- её подалгебра с нулевым умножением.

В силу простоты алгебры $A$ верно $A^2=A$, значит, $A= \langle x_1t,...,x_nt,t^2\rangle$.
Отсюда вытекает невырожденность оператора $R_{t}$.

Рассмотрим изотоп $C=A^{\ast}=A^{(\varphi,\varphi)}$, $\varphi=R_{t}^{-1}$,
относительно операции $u\ast v =a^\varphi b^\varphi$.
Положим $z_i=x_i^{\varphi^{-1}}\, n=\overline{1,n} $. Тогда
$z_i\ast z_j=z_i^{\varphi} z_j^{\varphi}=x_i x_j=0$, значит,
$Z:=V^{\varphi^{-1}}=\langle z_1,...,z_n\rangle$~--- $n$-мерная подалгебра
с нулевым умножением в изотопе $C$.
Далее, для любого $a\in A$ имеем:
\[a\ast (t^2) = (a^{\varphi})\cdot (t^2)^{\varphi}= (a R_{t}^{-1})((t^2) R_{t}^{-1})=(a R_{t}^{-1})t=a,\]
значит, $C$~--- унитальная алгебра с единицей $1:=t^2$. Поскольку $1\notin Z$, то $C=Z+F1$ и
$Z$~--- собственный идеал в простой алгебре $C$.
Полученное противоречие завершает доказательство.
\end{proof}
Хорошо известно \cite{Alb1942}, что простая конечно-мерная алгебра с единицей изотопно-проста.
Для алгебр с идемпотентом этот результат не верен
как показывают предложения \ref{pr:1} и \ref{pr:2}.

Глизбург Вита Иммануиловна,

Московский городской педагогический университет,

e-mail: glizburg@mail.ru
\[\qquad \qquad\]

Пчелинцев Сергей Валентинович,

Финансовый университет при Правительстве РФ, Москва,

e-mail: pchelinzev@mail.ru

\end{document}